\theoremstyle{plain}
\newtheorem{theorem}                 {Theorem}      [section]
\newtheorem{corollary}    [theorem]  {Corollary}
\newtheorem{lemma}        [theorem]  {Lemma}
\newtheorem{proposition}  [theorem]  {Proposition}
\theoremstyle{definition}
\newtheorem{example}      [theorem]  {Example}
\newtheorem{remark}       [theorem]  {Remark}
\numberwithin{equation}{section}
\def \rn{{\mathbb R}}
\def \A{\mathcal A}
\def \B{\mathcal B}
\def \F{\mathcal F}
\def \H{\mathcal H}
\def \I{\mathcal I}
\def \K{\mathcal K}
\def \V{\mathcal V}
\def\nab#1#2{\hbox{$\nabla$\kern -.3em\lower 1.0 ex
		\hbox{$#1$}\kern -.1 em {$#2$}}}
\def \lb#1#2{[#1,#2]}
\def \g{\mathfrak{g}}
\def \k{\mathfrak{k}}
\def \m{\mathfrak{m}}
\def \SL2{\widetilde{\text{\bf SL}}_{2}(\rn)}
\begin{document}

\title[Almost Hermitian Lie Groups]
{Natural almost Hermitian structures on\\ 
conformally foliated 4-dimensional\\ 
Lie groups with minimal leaves}

\author{Emma Andersdotter Svensson}
\address{Mathematics, Faculty of Science\\
University of Lund\\
Box 118, Lund 221 00\\
Sweden}
\email{emmaasv@hotmail.com}

\author{Sigmundur Gudmundsson}
\address{Mathematics, Faculty of Science\\
	University of Lund\\
	Box 118, Lund 221 00\\
	Sweden}
\email{Sigmundur.Gudmundsson@math.lu.se}

\begin{abstract}
Let $(G,g)$ be a 4-dimensional Riemannian Lie group with a  2-dimensional left-invariant, conformal foliation $\F$ with minimal leaves. Let $J$ be an almost Hermitian structure on $G$ adapted to the foliation $\F$.  We classify such structures $J$ which are almost Kähler $(\A\K)$, integrable $(\I)$ or Kähler $(\K)$.  Hereby we construct several new multi-dimensional examples in each class.
\end{abstract}

\subjclass[2010]{53C43, 58E20}

\keywords{harmonic morphisms, foliations, almost complex structures}


\maketitle

\section{Introduction}\label{section-introduction}

The theory of {\it harmonic morphisms} was initiated by Jacobi, in the  $3$-dimensional Euclidean geometry, with his famous paper \cite{Jac} from 1848. In the late 1970s this was then generalised to Riemannian geometry in \cite{Fug} and \cite{Ish} by Fuglede and Ishihara, independently.  This has lead to a vibrant development which can be traced back in the standard reference \cite{Bai-Woo-book}, by Baird and Wood, and the regularly updated online bibliography \cite{Gud-bib}, maintained by the second author.  The following result of Baird and Eells gives the theory of harmonic morphisms, with values in a surface, a strong geometric flavour.

\begin{theorem}\cite{Bai-Eel}\label{theo:B-E}
Let $\phi:(M^m,g)\to (N^2,h)$ be a horizontally conformal submersion from a Riemannian manifold to a surface. Then $\phi$ is harmonic if and only if $\phi$ has minimal fibres.
\end{theorem}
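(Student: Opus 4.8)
\noindent\emph{Proof proposal.}
The plan is to compute the tension field $\tau(\phi)=\trace\nabla d\phi$ with respect to a frame adapted to the submersion and to observe that the dimension hypothesis on the target makes the ``conformal'' part of $\tau(\phi)$ vanish, leaving only a term measuring the mean curvature of the fibres. Since $\phi$ is a submersion, the vertical distribution $\V=\ker d\phi$ and its orthogonal complement $\H$ are globally defined, the dilation $\lambda$ is a positive function on $M$, and $\langle d\phi(X),d\phi(Y)\rangle_{h}=\lambda^{2}\langle X,Y\rangle_{g}$ for $X,Y\in\H$, while $d\phi$ kills $\V$.

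First I would fix $p\in M$, choose a local orthonormal frame $\{V_{1},\dots,V_{m-2},X_{1},X_{2}\}$ with the $V_{a}$ spanning $\V$ and the $X_{j}$ spanning $\H$, and split $\tau(\phi)=\sum_{a}(\nabla d\phi)(V_{a},V_{a})+\sum_{j}(\nabla d\phi)(X_{j},X_{j})$. For the vertical sum, $d\phi(V_{a})\equiv 0$ forces $(\nabla d\phi)(V_{a},V_{a})=-d\phi(\nabla_{V_{a}}V_{a})$, and since $d\phi$ annihilates the vertical component of $\sum_{a}\nabla_{V_{a}}V_{a}$ one obtains $\sum_{a}(\nabla d\phi)(V_{a},V_{a})=-(m-2)\,d\phi(\mu^{\V})$, where $\mu^{\V}$ is the mean curvature vector field of the fibres, i.e.\ the $\H$-part of $\tfrac{1}{m-2}\sum_{a}\nabla_{V_{a}}V_{a}$. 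For the horizontal sum I would invoke the ``fundamental equation'' for horizontally conformal submersions, $(\nabla d\phi)(X,Y)=d\phi\bigl(X(\ln\lambda)\,Y+Y(\ln\lambda)\,X-\langle X,Y\rangle\,\grad^{\H}\!\ln\lambda\bigr)$ for horizontal $X,Y$ --- itself obtained by a direct computation in the adapted frame using horizontal conformality --- whose trace over $X_{1},X_{2}$ equals $(2-n)\,d\phi(\grad^{\H}\!\ln\lambda)$ with $n=\dim N$. Since $n=2$, this term vanishes.

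Combining the two pieces gives $\tau(\phi)=-(m-2)\,d\phi(\mu^{\V})$, and the equivalence follows by linear algebra. If $m=2$ the fibres are $0$-dimensional, hence trivially minimal, and the factor $m-2$ makes $\tau(\phi)=0$, so $\phi$ is harmonic. If $m>2$, then $d\phi_{p}$ restricts to a conformal, in particular injective, isomorphism $\H_{p}\to T_{\phi(p)}N$, while $\mu^{\V}_{p}\in\H_{p}$; hence $\tau(\phi)_{p}=0$ if and only if $\mu^{\V}_{p}=0$. As $p$ is arbitrary, $\phi$ is harmonic if and only if all its fibres are minimal.

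The one genuinely computational step is verifying the fundamental equation and, with it, that the two-dimensionality of $N$ annihilates the conformal term $(2-n)\,d\phi(\grad^{\H}\!\ln\lambda)$; the vertical computation and the concluding linear-algebra argument are routine.
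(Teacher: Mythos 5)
Your argument is correct: it is precisely the classical Baird--Eells computation, splitting $\tau(\phi)$ into the vertical trace $-(m-2)\,d\phi(\mu^{\V})$ and the horizontal trace $(2-n)\,d\phi(\grad^{\H}\ln\lambda)$, which dies exactly because $n=2$, and then using injectivity of $d\phi$ on $\H$. The paper itself gives no proof of this statement --- it is quoted from \cite{Bai-Eel} --- so there is nothing to compare against beyond noting that your route is the standard one found in \cite{Bai-Eel} and in Chapter~4 of \cite{Bai-Woo-book}, and it is sound.
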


In their work \cite{Gud-Sve-6}, the authors classify the $4$-dimensional Riemannian Lie groups $(G,g)$ equipped with a $2$-dimensional, conformal and left-invariant foliation $\F$ with minimal leaves.  Such a foliation is locally given by a submersive harmonic morphism into a surface.  For such a Riemannian Lie group there exist  natural almost Hermitian structures $J$ adopted to the foliation structure. 
\smallskip  

The purpose of this work is to classify these structures which are {\it almost Kähler} $(\A\K)$, {\it integrable} $(\I)$ or {\it Kähler} $(\K)$, see \cite{Gra-Her}.  Our  classification result is the following.

\begin{theorem}
Let $(G,g,J)$ be a 4-dimensional almost Hermitian Lie group equipped with a $2$-dimensional, conformal and left-invariant foliation $\F$ with minimal leaves. If a  natural Hermitian structure $J$, adapted to the foliation $\F$, is almost Kähler $(\A\K)$, integrable $(\I)$ or Kähler $(\K)$, then the corresponding Lie algebra $\g_k^{\A\K}$, $\g_k^{\I}$ or $\g_k^{\K}$ of $G$ is one of those given below $(k=1,2\dots,20)$.
\end{theorem}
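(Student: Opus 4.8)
The plan is to turn the statement into a finite collection of polynomial problems — one for each family in the classification of the underlying geometry — and to solve them. I would take as input the classification in \cite{Gud-Sve-6}: every $4$-dimensional Riemannian Lie group $(G,g)$ carrying a $2$-dimensional, conformal, left-invariant foliation $\F$ with minimal leaves has a Lie algebra $\g$ which, in a suitable orthonormal basis $\{X_1,X_2,X_3,X_4\}$ adapted to $\F$ — say with vertical distribution $\V=\operatorname{span}\{X_3,X_4\}$ tangent to the leaves and horizontal distribution $\H=\operatorname{span}\{X_1,X_2\}$ — has bracket relations lying in one of finitely many explicit parametrised families. For each family I would record the structure constants $c_{ij}^{k}$ in this basis.

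Next I would pin down the natural almost Hermitian structures. Being compatible with $g$ and adapted to $\F$, such a $J$ restricts to an orthogonal complex structure on each of the $2$-planes $\V$ and $\H$; a Euclidean plane carries exactly two of these, so $J$ is determined, up to an overall sign, by a choice of orientation on the leaves together with one on the horizontal distribution. Since replacing $J$ by $-J$ affects neither of the conditions below, there remain precisely two natural structures, which I would carry through in parallel; for each I set $\omega=g(J\cdot\,,\cdot)$ for the fundamental $2$-form.

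I would then translate the three properties into algebra. For left-invariant tensors on a Lie group one has
\begin{gather*}
d\omega(X,Y,Z)=-\,\omega([X,Y],Z)-\omega([Y,Z],X)-\omega([Z,X],Y),\\
N_J(X,Y)=[JX,JY]-[X,Y]-J[JX,Y]-J[X,JY],
\end{gather*}
and by the Gray--Hervella description \cite{Gra-Her} the structure $J$ is almost Kähler exactly when $d\omega=0$, Hermitian (integrable) exactly when $N_J\equiv 0$, and Kähler exactly when both hold, so the Kähler list is the intersection of the other two. Evaluating $d\omega$ on the basis triples and $N_J$ on the basis pairs turns each property, within a fixed family, into a small system of polynomial equations in that family's parameters. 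Running through the families one at a time, I would solve these systems by elementary elimination, discard parameter values that reproduce a geometry already on the list or that are carried to one by a Lie algebra isomorphism, and assemble the survivors into the claimed families $\g_k^{\A\K}$, $\g_k^{\I}$ and $\g_k^{\K}$ indexed by $k=1,\dots,20$.

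The conceptual content here is light; the genuine obstacle is the extent and care of the case analysis. The classification in \cite{Gud-Sve-6} has many branches, most of them carrying several parameters and splitting further according to which parameters vanish, and every branch has to be examined for both natural choices of $J$. The bulk of the work is to carry out this bookkeeping exhaustively and without redundancy — making sure that no solution family is overlooked and that a priori different solutions are correctly identified up to isomorphism — whereas the individual polynomial systems are routine to solve.
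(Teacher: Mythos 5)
Your proposal matches the paper's own proof in both structure and substance: the paper derives precisely the conditions $d\omega=0$ (yielding $\theta_1=2a$, $\theta_2=-2\alpha$) and $N_J=0$ (yielding $2z_1-z_4-w_2=0$, $2z_2+z_3+w_1=0$) as Lemmas, and then imposes them family by family on the twenty Lie algebras classified in \cite{Gud-Sve-6}, exactly as you outline. The only cosmetic difference is that the paper normalises the adapted basis so that $J(X)=Y$ and $J(Z)=W$, absorbing your second orientation choice into a sign change of the basis rather than carrying two structures in parallel.
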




\section{Two Dimensional Conformal Foliations $\F$ on $(G^4,g)$}
\label{section-the-foliations}

Let $(M,g)$ be a Riemannian manifold, $\V$ be an integrable distribution on $M$ and denote by $\H$ its orthogonal complementary distribution. As customary, we shall also by $\V$ and $\H$ denote the orthogonal projections onto the corresponding subbundles of the tangent bundle $TM$ of $M$ and denote by $\F$ the foliation tangent to $\V$. Then the second fundamental form for $\V$ is given by
$$B^\V(Z,W)=\frac 12\,\H(\nabla_ZW+\nabla_WZ)\qquad(Z,W\in\V),$$
while the second fundamental form for $\H$ satisfies 
$$B^\H(X,Y)=\frac{1}{2}\,\V(\nabla_XY+\nabla_YX)\qquad(X,Y\in\H).$$
The foliation $\F$ tangent to $\V$ is said to be {\it conformal} if there exits a vector field $V\in \V$ such that the second fundamental form $B^\H$ satisfies $$B^\H=g\otimes V$$ and
$\F$ is said to be {\it Riemannian} if $V=0$.
Furthermore, $\F$ is said to be {\it minimal} if $\text{trace}\ B^\V=0$ and
{\it totally geodesic} if $B^\V=0$. This is equivalent to the
leaves of $\F$ being minimal or totally geodesic submanifolds
of $M$, respectively.

It is well-known that the fibres of a horizontally conformal
map (resp.\ Riemannian submersion) give rise to a conformal foliation
(resp.\ Riemannian foliation). Conversely, the leaves of any
conformal foliation (resp.\ Riemannian foliation) are
locally the fibres of a horizontally conformal 
(resp.\ Riemannian) submersion, see \cite{Bai-Woo-book}.
\smallskip

\medskip

Let $(G,g)$ be a $4$-dimensional Lie group equipped with a left-invariant Riemannian metric $g$ and $K$ be a $2$-dimensional subgroup of $G$.  Let $\k$ and $\g$ be the Lie algebras of $K$ and $G$, respectively.  Let $\m$ be the two dimensional orthogonal complement of $\k$ in $\g$ with respect to the Riemannian metric $g$ on $G$.  By $\V$ we denote the integrable distribution generated by $\k$ and by $\H$ its  orthogonal distribution given by $\m$.  Further let $\F$ be the foliation of $G$ tangent to $\V$.  Let the set 
$$\B=\{X,Y,Z,W\}$$ 
be an orthonormal basis for the Lie algebra $\g$ of $G$, such that $Z,W$ generate the subalgebra $\k$ and $[W,Z]=\lambda\,W$ for some $\lambda\in\rn$. The elements $X,Y\in\m$ can clearly be chosen such that $\H [X,Y]=\rho\,X$ for some $\rho\in\rn$.  
\smallskip 

If the $2$-dimensional foliation $\F$ is conformal with minimal leaves then, following \cite{Gud-Sve-6}, the Lie bracket relations of $\g$ take the following form
\begin{eqnarray}\label{system}
	\lb WZ&=&\lambda W,\notag\\
	\lb ZX&=&\alpha X +\beta Y+z_1 Z+w_1 W\notag,\\
	\lb ZY&=&-\beta X+\alpha Y+z_2 Z+w_2 W,\\
	\lb WX&=&     a X     +b Y+z_3 Z-z_1W,\notag\\
	\lb WY&=&    -b X     +a Y+z_4 Z-z_2W,\notag\\
	\lb YX&=&     r X         +\theta_1 Z+\theta_2 W,\notag
\end{eqnarray}
with real structure coefficients. To ensure that these equations actually define a Lie algebra they must satisfy the Jacobi equations.  These are equivalent to a system of 14 second order homogeneous polynomial equations in 14 variables, see Appendix \ref{appendix}.  This has been solved in \cite{Gud-Sve-6} and the solutions give a complete classifications of such Lie algebras.  They form 20 multi-dimensional families which can be found therein.  

For the foliation $\F$ we state the following
easy result describing the geometry of the situation.

\begin{proposition}\label{prop-geometry}
Let $(G,g)$ be a 4-dimensional Lie group with Lie algebra $\g$ as above.  Then
\begin{enumerate}
\item[(i)] $\F$ is {\it totally geodesic} if and only if
$z_1=z_2=z_3+w_1=z_4+w_2=0$,
\item[(ii)] $\F$ is {\it Riemannian} if and only if $\alpha=a=0$, and
\item[(iii)] $\H$ is {\it integrable} if and only if $\theta_1=\theta_2=0$.
\end{enumerate}
\end{proposition}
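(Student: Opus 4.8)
The plan is to compute the relevant parts of the Levi-Civita connection $\nabla$ directly from the Lie bracket relations (\ref{system}) using the Koszul formula, and then read off the second fundamental forms $B^\V$ and $B^\H$ in the orthonormal basis $\B=\{X,Y,Z,W\}$. Since $Z,W$ span $\V$ and $X,Y$ span $\H$, everything reduces to evaluating $\langle\nabla_{E_i}E_j,E_k\rangle$ for basis elements; because the metric is left-invariant and the basis orthonormal, Koszul's formula collapses to
\begin{equation*}
2\,\langle\nabla_{E_i}E_j,E_k\rangle=\langle[E_i,E_j],E_k\rangle-\langle[E_j,E_k],E_i\rangle+\langle[E_k,E_i],E_j\rangle,
\end{equation*}
so each connection coefficient is an explicit linear combination of the structure constants $\lambda,\alpha,\beta,a,b,r,z_i,w_i,\theta_i$.

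For part (i), I would write $B^\V(Z,Z)=\H(\nabla_ZZ)$, $B^\V(W,W)=\H(\nabla_WW)$ and $B^\V(Z,W)=\frac12\H(\nabla_ZW+\nabla_WZ)$, expand each via Koszul, and observe that the $X$-- and $Y$--components pick out exactly $z_1,z_2$ (from $\nabla_ZZ$-type terms) and the combinations $z_3+w_1$, $z_4+w_2$ (from the mixed $Z,W$ term, where the $-z_1W$ and $-z_2W$ entries in $[WX],[WY]$ interact with the $z_1Z,z_2Z$ entries in $[ZX],[ZY]$). Setting $B^\V\equiv0$ is then equivalent to the vanishing of these four expressions, giving the stated condition $z_1=z_2=z_3+w_1=z_4+w_2=0$. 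For part (iii), $\H$ is integrable iff $\V[X,Y]=0$; from the last bracket relation $[Y,X]=rX+\theta_1Z+\theta_2W$ the $\V$--part is precisely $\theta_1Z+\theta_2W$, so integrability of $\H$ is equivalent to $\theta_1=\theta_2=0$, which is essentially immediate and needs no connection computation at all. For part (ii), $\F$ is Riemannian iff $B^\H=0$, i.e. $\V(\nabla_XY+\nabla_YX)=0$ together with $\V(\nabla_XX)=\V(\nabla_YY)=0$; computing these via Koszul, the $Z$-- and $W$--components of $\nabla_XX$ involve $\alpha,a$ (paired against $z_1,z_3$ resp.\ $w_1,\dots$), similarly for $\nabla_YY$ with $\alpha,a$ against $z_2,z_4$, while the symmetric mixed term vanishes identically by antisymmetry of the bracket; after simplification the whole obstruction reduces to $\alpha=a=0$.

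The only mildly delicate point is bookkeeping in part (ii): one must be careful that the conformality hypothesis (already built into the form of (\ref{system}) via \cite{Gud-Sve-6}) means $B^\H=g\otimes V$ for some $V\in\V$, so $B^\H$ is automatically pure trace; hence $B^\H=0$ is equivalent to $\trace B^\H=0$, i.e. to the single vector equation $\V(\nabla_XX+\nabla_YY)=0$ (up to the normalisation of $V$). Unwinding this with Koszul shows the $Z$--component is a multiple of $\alpha z_1+az_3+\alpha z_2+az_4$-type terms together with a clean $-\alpha$ and $-a$ contribution coming from the $[ZX],[ZY],[WX],[WY]$ brackets hitting their own leading $\alpha,a$ coefficients; tracking signs shows the coefficient of the pure $\alpha$ and pure $a$ terms is nonzero, so vanishing forces $\alpha=a=0$, and conversely $\alpha=a=0$ kills all these terms. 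I expect the sign- and index-tracking in this trace computation to be the main (though entirely routine) obstacle; parts (i) and (iii) are short.
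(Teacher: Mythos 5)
Your proposal is correct: the paper states this proposition without proof, and the Koszul-formula computation you outline is exactly the intended (and standard) argument, yielding $B^\V(Z,Z)=-z_1X-z_2Y=-B^\V(W,W)$, $B^\V(Z,W)=-\tfrac12\bigl((z_3+w_1)X+(z_4+w_2)Y\bigr)$, $\V\nabla_XX=\V\nabla_YY=\alpha Z+aW$ and $\V[Y,X]=\theta_1Z+\theta_2W$, which give (i)--(iii) directly. The only quibble is in your preview of part (ii): no pairing against $z_1,z_3,w_1,\dots$ actually survives --- the Koszul formula gives $\langle\nabla_XX,Z\rangle=\alpha$ and $\langle\nabla_XX,W\rangle=a$ on the nose --- so the computation is cleaner than you anticipate.
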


\section{The Adapted Almost Hermitian Structures}

Let $(G,g)$ be a $4$-dimensional Riemannian Lie group as above. Further, let $J$ be a left-invariant almost Hermitian structure on the tangent bundle $TG$ adapted to the decomposition $TG=\V\oplus\H$ i.e. a bundle isomorphism $J:TG\to TG$ such that 
$$J^2=-\text{id},\ \ J(\V )=\V\ \ \text{and}\ \ J(\H)=\H.$$
The almost Hermitian structure $J$ is left-invariant and can therefore be fully described by its action on the orthonormal basis 
$$\B=\{X,Y,Z,Y\}$$ of the Lie algebra $\g$ of $G$. Here the vector fields $X,Y\in\H$ and $Z,W\in\V$ can be chosen such that 
\begin{equation}\label{equation-J}
J(X)=Y\ \ \text{and}\ \ J(Z)=W.
\end{equation}

\begin{lemma}\label{lemma-almost-Kaehler}
Let $(G,g,J)$ be a $4$-dimensional almost Hermitian Lie group as above.  Then the structure $J$ is {\it almost Kähler} $(\A\K)$ if and only if 
$$\theta_1=2\, a\ \ \text{and}\ \ \theta_2=-2\,\alpha.$$ 
\end{lemma}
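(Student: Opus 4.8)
The plan is to compute the Kähler form $\omega$ associated to $(g,J)$ and test when it is closed, using the standard fact that $J$ is almost Kähler precisely when $d\omega=0$. Since everything is left-invariant, $\omega$ is the left-invariant $2$-form determined on the basis $\B=\{X,Y,Z,W\}$ by $\omega(A,B)=g(JA,B)$; with the normalisation \eqref{equation-J} this gives $\omega=X^\flat\wedge Y^\flat+Z^\flat\wedge W^\flat$, where $\{X^\flat,Y^\flat,Z^\flat,W^\flat\}$ is the dual coframe. So the computation reduces to evaluating $d\omega$ on triples of basis vectors.

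Next I would use the Maurer--Cartan formula $d\eta(A,B)=-\eta([A,B])$ for a left-invariant $1$-form $\eta$, which extends by the Leibniz rule to $d\omega$. Concretely, for a left-invariant $2$-form one has
$$d\omega(A,B,C)=-\omega([A,B],C)+\omega([A,C],B)-\omega([B,C],A),$$
and it suffices to check this on the four triples drawn from $\B$. I would read off all brackets from the system \eqref{system}, substitute, and collect terms. The $1$-forms $X^\flat,Y^\flat$ detect the $\H$-components of brackets and $Z^\flat,W^\flat$ the $\V$-components, so each evaluation picks out just a few of the structure constants. I expect the triple $\{X,Y,Z\}$ (and symmetrically $\{X,Y,W\}$) to be the decisive one: there $\omega([X,Y],Z)=\omega(rX+\theta_1Z+\theta_2W,Z)=-\theta_2$ (using $\omega(Z,W)=1$, hence $\omega(W,Z)=-1$), while $\omega([X,Z],Y)$ and $\omega([Y,Z],X)$ contribute the $\H$-parts of $[Z,X],[Z,Y]$, namely the terms with $\alpha,\beta$; similarly the $\{X,Y,W\}$ triple brings in $a,b$ and $\theta_1$. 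The remaining triples $\{X,Z,W\}$ and $\{Y,Z,W\}$ should either vanish identically or reproduce the same two conditions, so that $d\omega=0$ collapses exactly to $\theta_1=2a$ and $\theta_2=-2\alpha$.

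The main obstacle is purely bookkeeping: one must be careful with the sign conventions for $d$ on left-invariant forms and with the orientation of $\omega$ (whether $\omega(Z,W)=+1$ or $-1$ given $J(Z)=W$), since a sign slip would flip the signs in the final conditions. I would pin this down by first checking the formula on the flat/abelian case and then tracking the factor of $2$ that arises because each of $\alpha$ and $a$ appears in two of the brackets ($[Z,X]$ and $[Z,Y]$ for $\alpha$, $[W,X]$ and $[W,Y]$ for $a$), which is precisely where the coefficient $2$ in $\theta_1=2a$, $\theta_2=-2\alpha$ comes from. Once the signs are fixed, the equivalence is immediate from the explicit expression for $d\omega$.
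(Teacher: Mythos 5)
Your proposal is correct and follows essentially the same route as the paper: both test closedness of the left-invariant Kähler form $\omega(E,F)=g(JE,F)$ by evaluating $d\omega$ on the basis triples via the invariant (Maurer--Cartan type) formula, with the triple $\{X,Y,Z\}$ yielding $-\theta_2-2\alpha$ and $\{X,Y,W\}$ yielding the condition on $\theta_1$ and $a$. Only note the small sign slip in your sample evaluation ($[X,Y]=-[Y,X]=-(rX+\theta_1Z+\theta_2W)$, so $\omega([X,Y],Z)=+\theta_2$, not $-\theta_2$); this is exactly the bookkeeping you flagged and does not affect the method or the final conditions.
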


\begin{proof} 
The Kähler form $\omega$ satisfies $\omega(E,F)=g(JE,F)$ for all $E,F\in\g$.  It is well known that $J$ is almost Kähler if and only if the corresponding Kähler form $\omega$ is closed i.e. $d\omega=0$.
\begin{eqnarray*}
d\omega(X,Y,Z)
&=&
-g(J\left[X,Y\right],Z)
-g(J\left[Y,Z\right],X)
-g(J\left[Z,X\right],Y)\\
&=&
-g(J(-rX-\theta_1 Z-\theta_2 W),Z)\\
& &-
g(J(\beta X-\alpha Y-z_2 Z-w_2 W),X)\\
& &-g(J(\alpha X +\beta Y+z_1 Z+w_1 W),Y)\\
&=&
-g(-rY-\theta_1 W+\theta_2 Z,Z)\\
& &
-g(\beta Y+\alpha X-z_2 W+w_2 Z,X)\\
& &
-g(\alpha Y -\beta X+z_1 W-w_1 Z,Y)\\
&=&
-\theta_2-2\,\alpha.
\end{eqnarray*}
The statement is then easily obtained by performing the corresponding computations in the other cases 
$d\omega(W,X,Y)$, $d\omega(Z,W,X)$ and $d\omega(Y,Z,W)$.
\end{proof}

\begin{lemma}\label{lemma-integrable}
Let $(G,g,J)$ be a $4$-dimensional almost Hermitian Lie group as above.  Then the almost complex structure $J$ is {\it integrable} $(\I)$ if and only if 
$$2z_1-z_4-w_2=0\ \ \text{and}\ \ 2z_2+z_3+w_1=0.$$ 
\end{lemma}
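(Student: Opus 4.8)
The plan is to use the standard characterization of integrability of an almost complex structure via the vanishing of the Nijenhuis tensor $N_J$, defined by
$$N_J(E,F)=[JE,JF]-[E,F]-J[JE,F]-J[E,JF]\qquad(E,F\in\g).$$
By the Newlander--Nirenberg theorem, $J$ is integrable if and only if $N_J\equiv 0$, and since everything here is left-invariant it suffices to check this on the orthonormal basis $\B=\{X,Y,Z,W\}$. Because $N_J$ is skew-symmetric and $N_J(E,JE)=0$ for all $E$, the only essentially distinct pairs to test are $(X,Z)$, $(X,W)$, $(Y,Z)$ and $(Y,W)$; moreover, using $JX=Y$, $JZ=W$ together with $J^2=-\mathrm{id}$, one checks that $N_J(Y,W)=N_J(X,Z)$ and $N_J(Y,Z)=-N_J(X,W)$ up to applying $J$, so really everything reduces to computing $N_J(X,Z)$ and $N_J(X,W)$.

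First I would compute $N_J(X,Z)=[JX,JZ]-[X,Z]-J[JX,Z]-J[X,JZ]=[Y,W]-[X,Z]-J[Y,Z]-J[X,W]$. Substituting the bracket relations from \eqref{system}: $[Y,W]=bX-aY-z_4Z+z_2W$, $[X,Z]=-\alpha X-\beta Y-z_1Z-w_1W$, $[Y,Z]=-\beta X+\alpha Y+z_2Z+w_2W$ so $J[Y,Z]=-\beta Y-\alpha X+z_2W-w_2Z$, and $[X,W]=-aX-bY-z_3Z+z_1W$ so $J[X,W]=-aY+bX-z_3W-z_1Z$. Collecting the coefficients of $X$, $Y$, $Z$, $W$, the $X$- and $Y$-components should cancel automatically (reflecting that $J$ restricts to an integrable structure on the $2$-dimensional leaves and on $\H$), while the $Z$- and $W$-components will produce the two conditions. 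Then I would repeat the analogous computation for $N_J(X,W)=[Y,-Z]-[X,W]-J[Y,W]-J[X,-Z]$, which by the reductions above should yield the same two linear relations (or their negatives). Reading off the vanishing conditions gives $2z_1-z_4-w_2=0$ and $2z_2+z_3+w_1=0$.

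The computation is entirely routine bookkeeping; the only mild subtlety—the part worth being careful about—is verifying that the $X$- and $Y$-components of $N_J$ on each tested pair vanish identically (independently of the structure constants), so that no spurious extra conditions appear, and confirming that the four pairs genuinely collapse to these two relations rather than four. This is exactly parallel to the proof of Lemma~\ref{lemma-almost-Kaehler}, where only one of the four components of $d\omega$ is displayed and the rest are asserted to follow "by the corresponding computations," so I would present $N_J(X,Z)$ in full and then state that $N_J(X,W)$, $N_J(Y,Z)$ and $N_J(Y,W)$ give the same two equations.
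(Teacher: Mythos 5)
Your approach is the same as the paper's: check the Nijenhuis tensor on basis pairs, note that the pairs $(X,Y)$ and $(Z,W)$ contribute nothing, and read off the two conditions from the $\V$-components of $N_J(X,Z)$. The symmetry reduction via $N_J(JE,F)=-J\,N_J(E,F)$ is a nice streamlining (it in fact reduces everything to the single computation $N_J(X,Z)$, since $N_J(X,W)=-J\,N_J(X,Z)$ as well), whereas the paper simply asserts that the remaining three pairs are handled by similar calculations. One concrete slip to fix before this would pass as a proof: the expression you substitute for $[Y,Z]$, namely $-\beta X+\alpha Y+z_2Z+w_2W$, is actually $[Z,Y]$; the correct bracket is $[Y,Z]=\beta X-\alpha Y-z_2Z-w_2W$. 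If you carry your sign through literally, the $X$- and $Y$-components of $N_J(X,Z)$ come out as $2\alpha$ and $2\beta$ rather than zero, and the $\V$-components become $2z_1-z_4+w_2$ and $z_3+w_1$, which are not the stated conditions. With the corrected sign the horizontal components cancel identically and the $Z$- and $W$-components give exactly $2z_1-z_4-w_2$ and $2z_2+z_3+w_1$, matching the lemma and the paper's computation (up to the overall sign coming from your opposite convention for $N_J$).
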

\begin{proof}
It is well known that $J$ is integrable if and only if the  corresponding skew-symmetric Nijenhuis tensor $N$ vanishes i.e. if 
\begin{equation*}
N(E,F) = [E,F] + J[JE,F] + J[E,JF] - [JE,JF]=0,
\end{equation*}	for all $E,F\in\g$.  It is obvious that $N(X,Y)=N(Z,W)=0$. Now 
\begin{eqnarray*}
N(X,Z)
&=&[X,Z] + J[JX,Z] + J[X,JZ] - [JX,JZ]\\
&=&[X,Z] + J[Y,Z] + J[X,W] - [Y,W]\\
&=&-\alpha X -\beta Y-z_1 Z-w_1 W+J(\beta X-\alpha Y-z_2 Z-w_2 W)\\
& &-J(    a X     +b Y+z_3 Z-z_1W)-b X+a Y+z_4 Z-z_2W\\
&=&-\alpha X -\beta Y-z_1 Z-w_1 W+\beta Y+\alpha X-z_2 W+w_2 Z\\
& &- a Y     +b X-z_3 W-z_1Z-b X+a Y+z_4 Z-z_2W\\
&=&-(2z_1-z_4-w_2)\,Z-(2z_2+z_3+w_1)\,W.
\end{eqnarray*}
	Similar calculations for $N(X,W)$, $N(Y,Z)$ and $N(Y,W)$ easily provide the result.
\end{proof}

\begin{corollary}\label{corollary-Kähler}
Let $(G,g,J)$ be a $4$-dimensional almost Hermitian Lie group as above.  Then the structure $J$ is Kähler $(\K)$ if and only if 
$$2z_1-z_4-w_2=0,\ \ 2z_2+z_3+w_1=0,\ \ \theta_1=2a\ \ \text{and}\ \ \theta_2=-2\,\alpha.$$ 
\end{corollary}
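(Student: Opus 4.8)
The plan is to combine the two previous results in the obvious way. By definition, the almost Hermitian structure $J$ is K\"ahler $(\K)$ precisely when it is \emph{both} almost K\"ahler $(\A\K)$ and integrable $(\I)$; this is the standard characterization (see \cite{Gra-Her}) that K\"ahler equals almost K\"ahler plus Hermitian, since $d\omega=0$ together with the vanishing of the Nijenhuis tensor $N$ forces $\nabla J=0$. So no new computation is required: one simply intersects the two sets of conditions already obtained.

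Concretely, I would invoke Lemma \ref{lemma-almost-Kaehler} to replace the condition ``$J$ is almost K\"ahler'' by the pair of equations $\theta_1=2a$ and $\theta_2=-2\alpha$, and invoke Lemma \ref{lemma-integrable} to replace ``$J$ is integrable'' by $2z_1-z_4-w_2=0$ and $2z_2+z_3+w_1=0$. Conjoining all four equations gives exactly the stated system. One should perhaps note that these four conditions are to be imposed \emph{on top of} the Jacobi relations already built into the bracket system \eqref{system}, so that the relevant Lie algebras are those solutions of the classification of \cite{Gud-Sve-6} that additionally satisfy these four linear constraints.

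There is essentially no obstacle here: the corollary is a formal consequence of the two lemmas and the definition of the K\"ahler condition. The only point that needs a word of justification is the implication ``$d\omega=0$ and $N=0$ $\Rightarrow$ $\nabla J=0$,'' i.e. that we really do get K\"ahler and not merely almost K\"ahler Hermitian; this is classical and can be cited, so the proof reduces to a single sentence assembling Lemma \ref{lemma-almost-Kaehler} and Lemma \ref{lemma-integrable}.
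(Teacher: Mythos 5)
Your proposal is correct and follows exactly the paper's argument: the K\"ahler condition is the conjunction of the almost K\"ahler and integrability conditions, so the corollary follows immediately by combining Lemma \ref{lemma-almost-Kaehler} and Lemma \ref{lemma-integrable}. The extra remark on the classical equivalence ``$d\omega=0$ and $N=0$ $\Rightarrow$ $\nabla J=0$'' is a reasonable citation-level addition but does not change the route.
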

\begin{proof}
The almost Hermitian structure $J$ is Kähler if and only if it is almost Kähler and integrable. Hence the statement is an immediate consequence of Lemmas  \ref{lemma-almost-Kaehler} and \ref{lemma-integrable}.
\end{proof}

\begin{remark}\label{HyperbolicRmrk}
The $2$-dimensional Lie algebra $\k$ of the subgroup $K$ of $G$ has orthonormal basis $\{Z,W\}$, satisfying $[W,Z]=\lambda\,W$.  This means that its simply connected universal covering group $\widetilde K$ has constant Gaussian curvature $\kappa=-\lambda^2$.  Thus it is either a hyperbolic disk $H^2_\lambda$ or the flat Euclidean plane $\rn^2$.
\end{remark}

The $4$-dimensional Riemannian Lie groups $(G,g)$, with a left-invariant conformal foliation $\F$ with minimal leaves, were classified in \cite{Gud-Sve-6}.  The purpose of this work is to in investigate their adapted almost Hermitian structure $J$, given by (\ref{equation-J}), in each case.  Here we adopt the notation introduced in \cite{Gud-Sve-6}.


\section{Case (A) - ($\lambda\neq 0$ and $(\lambda-\alpha)^2+\beta^2\neq 0$)}

\begin{example}[$\g_{1}(\lambda,r,w_1,w_2)$]
\label{exam-A1}
This is a 4-dimensional family of {\it solvable} Lie algebras obtained by assuming that $r\neq0$, see \cite{Gud-Sve-6}. The Lie bracket relations are given by
\begin{eqnarray*}
\lb WZ&=&\lambda W,\\
\lb ZX&=&w_1 W,\\
\lb ZY&=&w_2 W,\\
\lb YX&=&r X+\frac{rw_1}{\lambda}W.
\end{eqnarray*}

\noindent
$(\mathcal{AK}):$
According to Lemma \ref{lemma-almost-Kaehler}, the adapted almost Hermitian structure $J$ is {\it almost K\"{a}hler} if and only if $\theta_1=2a$ and $\theta_2=-2\alpha$. We see from the bracket relations that $a=\theta_1=\alpha=0$, hence $\theta_2=0$. But $\lambda\theta_2=rw_1$ which implies that $rw_1=0$. Since $r\neq 0$ we conclude that the family is in the  almost K\"{a}hler class if and only if $w_1=0$. This provides the  3-dimensional family $\g_1^{\mathcal{AK}}(\lambda,r,w_2)$, given by
\begin{eqnarray*}
\lb WZ&=&\lambda W,\\
\lb ZY&=&w_2 W,\\
\lb YX&=& r X.
\end{eqnarray*}
Here the corresponding simply connected Lie groups are semidirect products $H^2_r\ltimes H^2_\lambda$ of hyperbolic disks, see Remark \ref{HyperbolicRmrk}.

\smallskip
\noindent		
$(\mathcal{I}):$
According to Lemma \ref{lemma-integrable} the structure $J$ is {\it integrable} if and only if $$2z_2+z_3+w_1=0\ \ \text{and}\ \ 2z_1-z_4-w_2=0.$$ Because $z_1=z_2=z_3=z_4=0$ we conclude that $w_1=w_2=0$. From this we yield the 2-dimensional family $\g_1^{\mathcal{I}}(\lambda,r)$ satisfying 
\begin{eqnarray*}
\lb WZ&=&\lambda W,\\
\lb YX&=& r X.
\end{eqnarray*}

\noindent		
$(\mathcal{K}):$
The family $\g_1^{\mathcal{K}}(\lambda,r)=\g_1^{\mathcal{I}}(\lambda,r)$ is contained in $\g_1^{\mathcal{AK}}(\lambda,r,w_2)$ and thus {\it Kähler}, according to Corollary \ref{corollary-Kähler}.  Here the corresponding simply connected Lie groups are direct products $H^2_r\ltimes H^2_\lambda$ of hyperbolic disks.
\end{example}


\begin{example}[$\g_{2}(\lambda,\alpha,\beta,w_1,w_2)$]
\label{exam-A2} 
Here we have the $5$-dimensional family of {\it solvable} Lie algebras with the bracket relations
\begin{eqnarray*}
\lb WZ&=&\lambda W,\\
\lb ZX&=&\alpha X+\beta Y+w_1 W,\\
\lb ZY&=&-\beta X+\alpha Y+w_2 W.
\end{eqnarray*}

\noindent
$(\mathcal{AK}):$
Since $\theta_1=\theta_2=a=0$, the family $\g_2$ is {\it almost Kähler} if and only if $\alpha=0.$ This yields a 4-dimensional family $\g_{2}^{\mathcal{AK}}(\lambda,\beta,w_1,w_2)$ with the Lie bracket relations
\begin{eqnarray*}
\lb WZ&=&\lambda W,\\
\lb ZX&=&\beta Y+w_1 W,\\
\lb ZY&=&-\beta X+w_2 W.
\end{eqnarray*}

\noindent
$(\mathcal{I}):$
Because $z_1=z_2=z_3=z_4=0$, the structure $J$ is {\it integrable}  if and only if $w_1=w_2=0$. This provides us with a 3-dimensional family
$\g_{2}^{\mathcal{I}}(\lambda,\alpha,\beta)$ given by
\begin{eqnarray*}
\lb WZ&=&\lambda W,\\
\lb ZX&=&\alpha X+\beta Y,\\
\lb ZY&=&-\beta X+\alpha Y.
\end{eqnarray*}
The corresponding simply connected Lie group is clearly  a semidirect product $H^2_\lambda\ltimes\rn^2$.

\smallskip	
\noindent	
$(\mathcal{K}):$
The family $\g_2$ is {\it Kähler} if and only if it is both almost K\"{a}hler and integrable i.e. if  $\alpha=w_1=w_2=0.$
Here we obtain the 2-dimensional family
$\g_{2}^{\mathcal{K}}(\lambda,\beta)$
given by
\begin{eqnarray*}
\lb WZ&=&\lambda W,\\
\lb ZX&=&\beta Y,\\
\lb ZY&=&-\beta X.
\end{eqnarray*}
Here the corresponding simply connected Lie group is a semidirect product $H^2_\lambda\ltimes\rn^2$.
\end{example}


\begin{example}[$\g_{3}(\alpha,\beta,w_1,w_2,\theta_2)$]
\label{exam-A3}
This is a $5$-dimensional family of {\it solvable} Lie algebras with $r=\theta_1=0,$ $\theta_2\neq 0$ and $\lambda=-2\alpha$. The bracket relations are given by 
\begin{eqnarray*}
\lb WZ&=&-2\alpha W,\\
\lb ZX&=&\alpha X+\beta  Y+w_1 W,\\
\lb ZY&=&-\beta  X+\alpha Y+w_2 W,\\
\lb YX&=&\theta_2W.
\end{eqnarray*}

\noindent
$(\mathcal{AK}):$
Here $a=\theta_1=0,$ $\lambda=-2\alpha\neq0$ and $\theta_2\neq0.$ Thus, the family $\g_3$ is {\it almost Kähler} if and only if $\theta_2=-2\alpha=\lambda\neq 0,$ which gives 
the $4$-dimensional family $\g_{3}^{\mathcal{AK}}(\alpha,\beta,w_1,w_2)$ with
\begin{eqnarray*}
\lb WZ&=&-2\alpha W,\\
\lb ZX&=&\alpha X+\beta  Y+w_1 W,\\
\lb ZY&=&-\beta  X+\alpha Y+w_2 W,\\
\lb YX&=&-2\alpha W.
\end{eqnarray*}

\noindent
$(\mathcal{I}):$ Because $z_1=z_2=z_3=z_4=0$, we see that the structure $J$ is {\it integrable} if and only if $w_1=w_2=0,$ and obtain the family $\g_{3}^{\mathcal{I}}(\alpha,\beta,\theta_2)$ with
\begin{eqnarray*}
\lb WZ&=&-2\alpha W,\\
\lb ZX&=&\alpha X+\beta  Y,\\
\lb ZY&=&-\beta  X+\alpha Y,\\
\lb YX&=&\theta_2W.
\end{eqnarray*}
	
\noindent	
($\mathcal{K}):$ 
The family $\g_3$ is {\it Kähler} if and only if $\theta_2=-2\alpha\neq 0$ and $ w_1=w_2=0.$ This provides $\g_{3}^\mathcal{K}(\alpha,\beta)$ with
\begin{eqnarray*}
\lb WZ&=&-2\alpha W,\\
\lb ZX&=&\alpha X+\beta  Y,\\
\lb ZY&=&-\beta  X+\alpha Y,\\
\lb YX&=&-2\alpha W.
\end{eqnarray*}
\end{example}


\section{Case (B) - ($\lambda\neq 0$ and $(\lambda-\alpha)^2+\beta^2=0$)}

\begin{example}[$\g_{4}(\lambda,z_2,w_1,w_2)$]\label{exam-B1}
Here we have the $4$-dimensional family of {\it solvable} Lie algebras with the bracket relations	
\begin{eqnarray*}
\lb WZ&=&\lambda W,\\
\lb ZX&=&\lambda X+w_1 W,\\
\lb ZY&=&\lambda Y+z_2Z+w_2 W,\\
\lb WY&=&-z_2W,\\
\lb YX&=&-z_2 X-\frac{z_2w_1}{\lambda}W.
\end{eqnarray*}

\noindent
$(\mathcal{AK}):$ From $\theta_1=2a=0$, $\alpha=\lambda\neq0$ and $\lambda\theta_2=-z_2 w_1$, we see that $\g_4$ is {\it almost Kähler} if and only if $\theta_2=-2\alpha=-2\lambda.$ This implies that $2\lambda^2=w_1z_2\neq 0$ and we yield the $3$-dimensional  $\g_{4}^{\mathcal{AK}}(\lambda,z_2,w_2)$ with the following Lie bracket relations
\begin{eqnarray*}
\lb WZ&=&\lambda W,\\
\lb ZX&=& \lambda X+\frac{2\lambda^2}{z_2} W,\\
\lb ZY&=&\lambda Y+z_2Z+w_2 W,\\
\lb WY&=&-z_2W,\\
\lb YX&=&-z_2 X-2\lambda W.
\end{eqnarray*}

\noindent
$(\mathcal{I}):$ 
Because $z_1=z_3=z_4=0$, the structure $J$ is {\it integrable} if and only if $2z_2+w_1=w_2=0$. This gives a 2-dimensional family $\g_{4}^{\mathcal{I}}(\lambda,z_2)$ with bracket relations
\begin{eqnarray*}
\lb WZ&=&\lambda W,\\
\lb ZX&=&\lambda X-2z_2 W,\\
\lb ZY&=&\lambda Y+z_2Z,\\
\lb WY&=&-z_2W,\\
\lb YX&=&-z_2 X+\frac{2z_2^2}{\lambda}W.
\end{eqnarray*}

\noindent
$(\mathcal{K}):$
Here the {\it Kähler} condition is never satisfied. This requires $\lambda^2+z_2^2=0$, which clearly has no real solutions for $\lambda\neq 0$. Hence $\g_{4}^{\mathcal{K}}=\{0\}$.
\end{example}


\section{Case (C) - ($\lambda=0$, $r\neq 0$ and $(a\beta-\alpha b)\neq 0$)}

\begin{example}[$\g_{5}(\alpha,a,\beta,b,r)$]\label{exam-C1}
This is the $5$-dimensional family of {\it solvable} Lie algebras satisfying the bracket relations
\begin{eqnarray*}
\lb ZX&=&\alpha X +\beta Y
-\frac{r(\beta b-\alpha a)}{2(\alpha b-a\beta)} Z
-\frac{r(\alpha^2-\beta^2)}{2(\alpha b-a\beta)} W,\\
\lb ZY&=&-\beta X+\alpha Y
-\frac{r(\alpha b+\beta a)}{2(\alpha b-a\beta)} Z
+\frac{r\alpha\beta}{(\alpha b-a\beta)} W,\\
\lb WX&=&     a X     +b Y
-\frac{r(b^2-a^2)}{2(\alpha b-a\beta)} Z
-\frac{r(\alpha a-\beta b)}{2(\alpha b-a\beta)}W,\\
\lb WY&=&    -b X     +a Y
-\frac{rab }{(\alpha b-a\beta)} Z
+\frac{r(\alpha b+\beta a)}{2(\alpha b-a\beta)}W,\\
\lb YX&=&     r X
+\frac{ar^2}{2(\alpha b-a\beta)} Z
-\frac{\alpha r^2}{2(\alpha b-a\beta)} W.
	\end{eqnarray*}	
	
\noindent
$(\mathcal{AK}):$
In this case we have the two relations
$$\theta_1=\frac{ar^2}{2(\alpha b-a\beta)} \ \ \text{and} \ \ \theta_2=-\frac{\alpha r^2}{2(\alpha b-a\beta)}.$$ 
They imply that the structure $J$ is {\it almost Kähler} if and only if
$$2a=\frac{ar^2}{2(\alpha b-a\beta)} \ \ \text{and} \ \ 2\alpha=\frac{\alpha r^2}{2(\alpha b-a\beta)}.$$
The condition $a\beta-\alpha b\neq0$ shows that not both $a$ and $\alpha$ can be zero, which implies that $r^2=4{(\alpha b-a\beta)}>0$.  If we substitute this into the above system, we then yield the following two solutions
\begin{eqnarray*}
\lb ZX&=&\alpha X +\beta Y
\mp\frac{(\beta b-\alpha a)}{\sqrt{\alpha b-a\beta}} Z
\mp\frac{(\alpha^2-\beta^2)}{\sqrt{\alpha b-a\beta}} W,\\
\lb ZY&=&-\beta X+\alpha Y
\mp\frac{(\alpha b+\beta a)}{\sqrt{\alpha b-a\beta}} Z
\pm\frac{2\alpha\beta}{\sqrt{\alpha b-a\beta}} W,\\
\lb WX&=&     a X     +b Y
\mp\frac{(b^2-a^2)}{\sqrt{\alpha b-a\beta}} Z
\mp\frac{(\alpha a-\beta b)}{\sqrt{\alpha b-a\beta}}W,\\
\lb WY&=&    -b X     +a Y
\mp\frac{2ab }{\sqrt{\alpha b-a\beta}} Z
\pm\frac{(\alpha b+\beta a)}{\sqrt{\alpha b-a\beta}}W,\\
\lb YX&=& \pm 2\sqrt{\alpha b-a\beta}\, X
+2a\, Z
-2\alpha\, W.
\end{eqnarray*}	
Here we have obtained two $4$-dimensional families $\g_{5}^{\A\K}(\alpha,a,\beta,b)^\pm$ satisfying the almost Kähler condition.

\smallskip 
\noindent
$(\mathcal{I}):$ 
Here we are assuming that $r\neq 0$, $a\beta-\alpha b\neq0$ and  have 
$$
z_1=-\frac{r(\beta b-\alpha a)}{2(\alpha b-a\beta)}, \ \
z_2=-\frac{r(\alpha b+\beta a)}{2(\alpha b-a\beta)}, \ \ z_3=-\frac{r(b^2-a^2)}{2(\alpha b-a\beta)},
$$ 
$$
z_4=-\frac{2rab }{2(\alpha b-a\beta)}, \ \ w_1=-\frac{r(\alpha^2-\beta^2)}{2(\alpha b-a\beta)},\ \ 
w_2=\frac{2r\alpha\beta}{2(\alpha b-a\beta)}.$$
From this we observe that $J$ is {\it integrable} if and only if 
$$
2(\alpha b+\beta a)
+b^2-a^2
+\alpha^2-\beta^2=0$$
and
$$\beta b-\alpha a
-ab
+\alpha\beta
=0.$$
A simple calculation shows that $J$ is integrable if and only if $a=\beta$ and $b=-\alpha$. This provides us with the $3$-dimensional family $\g_{5}^\mathcal{I}(\alpha,\beta,r)$ fulfilling the bracket relations 
\begin{eqnarray*}
	\lb ZX&=&\alpha X +\beta Y
	-\frac{r\alpha\beta}{(\beta^2+\alpha^2)} Z+\frac{r(\alpha^2-\beta^2)}{2(\beta^2+\alpha^2)} W,\\
	\lb ZY&=&-\beta X+\alpha Y
	-\frac{r(\alpha^2-\beta^2)}{2(\beta^2+\alpha^2)} Z-\frac{r\alpha\beta}{(\beta^2+\alpha^2)} W,\\
	\lb WX&=&     \beta X     -\alpha Y
	+\frac{r(\alpha^2-\beta^2)}{2(\beta^2+\alpha^2)} Z+\frac{r\alpha \beta}{(\beta^2+\alpha^2)}W,\\
	\lb WY&=&    \alpha X     +\beta Y
	-\frac{r\alpha\beta }{(\beta^2+\alpha^2)} Z+\frac{r(\alpha^2-\beta^2)}{2(\beta^2+\alpha^2)}W,\\
	\lb YX&=&     r X
	-\frac{\beta r^2}{2(\beta^2+\alpha^2)} Z+\frac{\alpha r^2}{2(\beta^2+\alpha^2)} W.
\end{eqnarray*}

\noindent		
$(\mathcal{K}):$ Combining the observations $r^2=4{(\alpha b-a\beta)}>0$ from $(\A\K)$ and $a=\beta$, $b=-\alpha$ from $(\I)$ we see that the {\it Kähler} condition is never satisfied for the Lie algebra $\g_5$, so $\g_{5}^{\mathcal{K}}=\{0\}$.
\end{example}


\section{Case (D) - ($\lambda=0$, $r\neq 0$ and $(a\beta-\alpha b)=0$)}

\begin{example}[$\g_{6}(z_1,z_2,z_3,r,\theta_1,\theta_2)$]\label{exam-D1}
Here we have a $6$-dimensional family of {\it solvable} Lie algebras with $z_1^2=-w_1z_3\neq 0$,
$$z_4 =\frac{z_3(r+2z_2)}{2z_1},\ \ w_1 = -\frac{z_1^2}{z_3}	\ \ \text{and}\ \ w_2 = \frac{z_1(r-2z_2)}{2z_3}.$$ 
In this case the Lie bracket relations are given by
\begin{eqnarray*}
\lb ZX&=&z_1 Z-\frac{z_1^2}{z_3} W,\\
\lb ZY&=&z_2 Z+\frac{z_1(r-2z_2)}{2z_3} W,\\
\lb WX&=&z_3 Z-z_1W,\\
\lb WY&=&\frac{z_3(r+2z_2)}{2z_1} Z-z_2W,\\
\lb YX&=&r X+\theta_1 Z+\theta_2 W.
\end{eqnarray*}

\noindent	
$(\mathcal{AK}):$ 
Because $a=\alpha=0$, the structure $J$ is {\it almost Kähler} if and only if $\theta_1=\theta_2=0$.  This provides the $4$-dimensional family $\g_{6}^{\mathcal{AK}}(z_1,z_2,z_3,r)$ with the bracket relations
\begin{eqnarray*}
\lb ZX&=&z_1 Z-\frac{z_1^2}{z_3} W,\\
\lb ZY&=&z_2 Z+\frac{z_1(r-2z_2)}{2z_3} W,\\
\lb WX&=&z_3 Z-z_1W,\\
\lb WY&=&\frac{z_3(r+2z_2)}{2z_1} Z-z_2W,\\
\lb YX&=&r X.
\end{eqnarray*}
Here the corresponding simply connected Lie group is a semidirect product $H^2_r\ltimes\rn^2$.

\smallskip
\noindent		
$(\mathcal{I}):$
We note that 
$$z_1\neq0, \ \ z_4=\frac{z_3(r+2z_2)}{2z_1}, \ \ w_1=-\frac{z_1^2}{z_3} \ \ \text{and} \ \ w_2=\frac{z_1(r-2z_2)}{2z_3}.$$ 
Here the structure $J$ is {\it integrable} if and only if 
$$z_1^2=(2z_2+z_3)z_3 \ \ \text{and} \ \
z_1^2(4z_3-r+2z_2)=z_3^2(r+2z_2).$$
By multiplying the first equation by $(4z_3-r+2z_2)$, we see that these two conditions imply 
\begin{equation*}
(2z_2+z_3)(4z_3-r+2z_2)=z_3(r+2z_2),
\end{equation*}
which can be rewritten as
\begin{equation*}
(z_2+z_3)(2(z_2+z_3)-r)=0.
\end{equation*}
If $z_2+z_3=0$, our earlier conditions give $z_1^2+z_3^2=0$, which is not possible for nonzero real constants. Thus we must have $z_2+z_3\neq0$. This leaves us with $r=2(z_2+z_3)$. We conclude that $J$ is {\it integrable} if and only if 
$$r=\frac{z_1^2+z_3^2}{z_3} \ \ \text{and} \ \ z_2=\frac{z_1^2-z_3^2}{2z_3}.$$   This leads to the following bracket relations for the 4-dimensional family
$\g_{6}^\mathcal{I}(z_1,z_3,\theta_1,\theta_2)$
		\begin{eqnarray*}
			\lb ZX&=&z_1 Z-\frac{z_1^2}{z_3} W,\\
			\lb ZY&=&\frac{z_1^2-z_3^2}{2z_3} Z+z_1 W,\\
			\lb WX&=&z_3 Z-z_1W,\\
			\lb WY&=&z_1 Z-\frac{z_1^2-z_3^2}{2z_3}W,\\
			\lb YX&=&\frac{z_1^2+z_3^2}{z_3} X+\theta_1 Z+\theta_2 W.
		\end{eqnarray*}
	
\noindent
$(\mathcal{K}):$
In this case the {\it K\"{a}hler} condition is fulfilled if and only if
$$\theta_1=\theta_2=0, \ \ r=\frac{z_1^2+z_3^2}{z_3} \ \ \text{and} \ \ z_2=\frac{z_1^2-z_3^2}{2z_3}.$$
Hence we have the 2-dimensional family $\g_{6}^\mathcal{K}(z_1,z_3)$ with bracket relations
		\begin{eqnarray*}
			\lb ZX&=&z_1 Z-\frac{z_1^2}{z_3} W,\\
			\lb ZY&=&\frac{z_1^2-z_3^2}{2z_3} Z+z_1 W,\\
			\lb WX&=&z_3 Z-z_1W,\\
			\lb WY&=&z_1 Z-\frac{z_1^2-z_3^2}{2z_3}W,\\
			\lb YX&=&\frac{z_1^2+z_3^2}{z_3} X.
		\end{eqnarray*}
Here the corresponding simply connected Lie group is a semidirect product $H^2_{r}\ltimes \rn^2$.
\end{example}


\begin{example}[$\g_{7}(z_2,w_1,w_2,\theta_1,\theta_2)$]
\label{exam-D2}
This is a $5$-dimensional family of {\it solvable} Lie algebras with $z_1=z_3=z_4=0$, $r=2z_2$ and $w_1\neq 0$. 
The bracket relations are
\begin{eqnarray*}
\lb ZX&=&w_1 W,\\
\lb ZY&=&z_2 Z+w_2 W,\\
\lb WY&=&-z_2 W,\\
\lb YX&=&2z_2 X+\theta_1 Z+\theta_2 W.
\end{eqnarray*}
	
\noindent
$(\mathcal{AK}):$ 
Because $\alpha=a=0$, the family belongs to the {\it almost Kähler} class if and only if $\theta_1=\theta_2=0$. This gives the  $3$-dimensional family $\g_{7}^{\mathcal{AK}}(z_2,w_1,w_2)$ with
\begin{eqnarray*}
\lb ZX&=&w_1 W,\\
\lb ZY&=&z_2 Z+w_2 W,\\
\lb WY&=&-z_2 W,\\
\lb YX&=&2z_2 X.
\end{eqnarray*}
Here the corresponding simply connected Lie group is a semidirect product $H^2_{2z_2}\ltimes\rn^2$.

\smallskip	
\noindent
$(\mathcal{I}):$
We have $z_1=z_3=z_4=0.$ Thus the structure $J$ is {\it integrable} if and only if $2z_2+w_1=w_2=0$ and we yield a 3-dimensional family $\g_{7}^{\mathcal{I}}(z_2,\theta_1,\theta_2)$
given by
		\begin{eqnarray*}
			\lb ZX&=&-2z_2 W,\\
			\lb ZY&=&z_2 Z,\\
			\lb WY&=&-z_2 W,\\
			\lb YX&=&2z_2 X+\theta_1 Z+\theta_2 W.
		\end{eqnarray*}
Note that $z_2\neq0$.

\smallskip
\noindent
$(\mathcal{K}):$
The family $\g_7$ is of the {\it Kähler} class if and only if $\theta_1=\theta_2=0$, $w_2=0$ and $w_1=-2z_2\neq0$ providing the  $1$-dimensional family
$\g_{7}^{\mathcal{K}}(z_2)$ with
		\begin{eqnarray*}
			\lb ZX&=&-2z_2 W,\\
			\lb ZY&=&z_2 Z,\\
			\lb WY&=&-z_2 W,\\
			\lb YX&=&2z_2 X.
		\end{eqnarray*}
The corresponding simply connected Lie group is clearly a semidirect product $H^2_{2z_2}\ltimes \rn^2$.
\end{example}


\begin{example}[$\g_{8}(z_2,z_4,w_2,r,\theta_1,\theta_2)$]
\label{exam-D3}
Here we have the $6$-dimensional family of {\it solvable} Lie algebras with bracket relations
\begin{eqnarray*}
\lb ZY&=&z_2 Z+w_2 W,\\
\lb WY&=&z_4 Z-z_2 W,\\
\lb YX&=&r X+\theta_1 Z+\theta_2 W.
\end{eqnarray*}

\noindent
$(\mathcal{AK}):$
Since $\alpha=a=0$, the family is almost K\"{a}hler if and only if $\theta_1=\theta_2=0,$ giving the $4$-dimensional family
$\g_{8}^{\mathcal{AK}}(z_2,z_4,w_2,r)$ with
		\begin{eqnarray*}
			\lb ZY&=&z_2 Z+w_2 W,\\
			\lb WY&=&z_4 Z-z_2 W,\\
			\lb YX&=&r X.
		\end{eqnarray*}
Here the corresponding simply connected Lie group is a semidirect product $H^2_r\ltimes\rn^2$.

\smallskip	
\noindent
$(\mathcal{I}):$
We have $z_1=z_3=w_1=0,$ so $J$ is integrable if and only if $z_2=z_4+w_2=0$, so we yield the family
$\g_{8}^{\mathcal{I}}(w_2,r,\theta_1,\theta_2)$ with
		\begin{eqnarray*}
			\lb ZY&=&w_2 W,\\
			\lb WY&=&-w_2 Z,\\
			\lb YX&=&r X+\theta_1 Z+\theta_2 W.
		\end{eqnarray*}

\noindent	
$(\mathcal{K}):$
The family is K\"{a}hler if and only if $\theta_1=\theta_2=z_2=z_4+w_2=0.$ 
We get the $2$-dimensional family $\g_{8}^{\mathcal{K}}(w_2,r)$ with
		\begin{eqnarray*}
			\lb ZY&=&w_2 W,\\
			\lb WY&=&-w_2 Z,\\
			\lb YX&=&r X.
		\end{eqnarray*}
The corresponding simply connected Lie group is clearly a semidirect product $H^2_{r}\ltimes \rn^2$
\end{example}


\begin{example}[$\g_{9}(z_2,z_3,z_4,\theta_1,\theta_2)$]
\label{exam-D4}
This is the $5$-dimensional family of {\it solvable} Lie algebras with $z_3\neq 0$ and bracket relations
	\begin{eqnarray*}
		\lb ZY&=&z_2 Z,\\
		\lb WX&=&z_3 Z,\\
		\lb WY&=&z_4 Z-z_2 W,\\
		\lb YX&=&-2z_2 X+\theta_1 Z+\theta_2 W.
	\end{eqnarray*}

\noindent	
$(\mathcal{AK}):$ 
Because $\alpha=a=0$, we see that the family is in the {\it almost K\"{a}hler} class if and only if $\theta_1=\theta_2=0,$ providing $\g_{9}^{\mathcal{AK}}(z_2,z_3,z_4)$ with
		\begin{eqnarray*}
			\lb ZY&=&z_2 Z,\\
			\lb WX&=&z_3 Z,\\
			\lb WY&=&z_4 Z-z_2 W,\\
			\lb YX&=&-2z_2 X.
		\end{eqnarray*}
		
\noindent	
$(\mathcal{I}):$ 
From $z_1=w_1=w_2=0,$ we find that $J$ is {\it integrable} if and only if $2z_2+z_3=z_4=0,$ from which we yield $\g_{9}^{\mathcal{I}}(z_2,\theta_1,\theta_2)$ with the relations
		\begin{eqnarray*}
			\lb ZY&=&z_2 Z,\\
			\lb WX&=&-2z_2 Z,\\
			\lb WY&=&-z_2 W,\\
			\lb YX&=&-2z_2 X+\theta_1 Z+\theta_2 W.
		\end{eqnarray*}

\noindent
$(\mathcal{K}):$
The family is {\it K\"{a}hler} if and only if
$\theta_1=\theta_2=0,$ $z_3=-2z_2$ and $z_4=0.$
Here we obtain the $1$-dimensional family $\g_{9}^{\mathcal{K}}(z_2)$ with
		\begin{eqnarray*}
			\lb ZY&=&z_2 Z,\\
			\lb WX&=&-2z_2 Z,\\
			\lb WY&=&-z_2 W,\\
			\lb YX&=&-2z_2 X.
		\end{eqnarray*}
The corresponding simply connected Lie group is clearly a semidirect product $H^2_{2z_2}\ltimes \rn^2$
\end{example}


\section{Case (E) - ($\lambda=0$, $r=0$ and $\alpha b-a\beta\neq 0$)}

\begin{example}[$\g_{10}(\alpha,a,\beta,b)$]\label{exam-E1}
Here we have the $4$-dimensional family of {\it solvable} Lie algebras with the bracket relations
	\begin{eqnarray*}
		\lb ZX&=&\alpha X+ \beta Y,\\
		\lb ZY&=&-\beta X+\alpha Y,\\
		\lb WX&=&     a X     +b Y,\\
		\lb WY&=&    -b X     +a Y.
	\end{eqnarray*}
Here the corresponding simply connected Lie group is a semidirect product $\rn^2\ltimes\rn^2$.

\smallskip		
\noindent	
$(\mathcal{AK}):$
Since $\theta_1=\theta_2=0$, we see that the family is {\it almost K\"{a}hler} if and only if $a=\alpha=0.$ But then the requirement $\alpha b-a\beta\neq0$ is not true, so this family can not be almost K\"{a}hler, so $\g_{10}^{\A\mathcal{K}}=\{0\}$.

\smallskip		
\noindent
$(\mathcal{I}):$ 
Because $z_1=z_2=z_3=z_4=w_1=w_2=0$, we can see that the structure $J$ is always {\it integrable} in this family, therefore  $\g_{10}^{\mathcal{I}}=\g_{10}(\alpha,a,\beta,b)$.

\smallskip		
\noindent
$(\mathcal{K}):$
The Lie algebra $\g_{10}$ is never in the {\it K\"{a}hler} class, hence $\g_{10}^{\mathcal{K}}=\{0\}$.
\end{example}


\section{Case (F) - ($\lambda=0$, $r=0$ and $\alpha b-a\beta= 0$)}
In this case our analysis divides into disjoint cases parametrized by
$\Lambda=(\alpha,a,\beta,b)$.  The variables are assumed to be zero if and only if they are marked by $0$. For example, if $\Lambda=(0,a,\beta,0)$ then the two variable $\alpha$
and $b$ are assumed to be zero and $a$ and $\beta$ to be non-zero.

\begin{example}[$\g_{11}(z_1,z_2,z_3,w_1,\theta_1,\theta_2)$]\label{exam-F1}
This is a $6$-dimensional family of {\it solvable} Lie algebras. Here $\Lambda=(0,0,0,0)$ and $z_1\neq 0$, giving the Lie bracket relations
	\begin{eqnarray*}
		\lb ZX&=&z_1 Z+w_1 W,\\
		\lb ZY&=&z_2 Z+\frac{z_2w_1}{z_1} W,\\
		\lb WX&=&z_3 Z-z_1W,\\
		\lb WY&=&\frac{z_2z_3}{z_1} Z-z_2 W,\\
		\lb YX&=&\theta_1 Z+\theta_2 W.
	\end{eqnarray*}

\noindent	
$(\mathcal{AK}):$
Because $\alpha=a=0$, we see that $\g_{11}$ is in the {\it almost K\"{a}hler} class if and only if $\theta_1=\theta_2=0.$ This gives a 4-dimensional family $\g_{11}^{\mathcal{AK}}(z_1,z_2,z_3,w_1)$
with
		\begin{eqnarray*}
			\lb ZX&=&z_1 Z+w_1 W,\\
			\lb ZY&=&z_2 Z+\frac{z_2w_1}{z_1} W,\\
			\lb WX&=&z_3 Z-z_1W,\\
			\lb WY&=&\frac{z_2z_3}{z_1} Z-z_2 W.
		\end{eqnarray*}
Here the corresponding simply connected Lie group is a semidirect product $\rn^2\ltimes\rn^2$.

\smallskip		
\noindent
$(\mathcal{I}):$
From the relations 
$$z_1\neq0, \ \ z_4=\frac{z_2 z_3}{z_1} \ \ \text{and} \ \ w_2=\frac{z_2 w_1}{z_1},$$ we see that the structure $J$ is {\it integrable} if and only if
$$z_3+w_1=-2z_2\ \ \text{and} \ \ 2z_1^2-z_2(z_3+w_1)=0.$$
This implies the impossible $z_1^2+z_2^2=0$, so the structure $J$ is never integrable, therefore $\g_{11}^{\mathcal{K}}=\{0\}$.

\smallskip		
\noindent
$(\mathcal{K}):$ 
This family $\g_{11}$ is never in the {\it K\"{a}hler} class, so $\g_{11}^{\mathcal{K}}=\{0\}$.
\end{example}


\begin{example}[$\g_{12}(z_3,w_1,w_2,\theta_1,\theta_2)$]
\label{exam-F2}
Here we have the $5$-dimensional family of {\it solvable} Lie algebras. Here $\Lambda=(0,0,0,0)$, $z_1=0$ and $w_1\neq 0$, which gives
$$z_2=0\ \ \text{and}\ \ z_4=\frac{z_3w_2}{w_1}$$
and the Lie bracket relations
	\begin{eqnarray*}
		\lb ZX&=&w_1 W,\\
		\lb ZY&=&w_2 W,\\
		\lb WX&=&z_3 Z,\\
		\lb WY&=&\frac{z_3w_2}{w_1} Z,\\
		\lb YX&=&\theta_1 Z+\theta_2 W.
	\end{eqnarray*}

\noindent	
$(\mathcal{AK}):$ 
Since $\alpha=a=0$, this family is {\it almost K\"{a}hler} if and only if $\theta_1=\theta_2=0.$ This yields the 3-dimensional family $\g^{\mathcal{AK}}_{12}(z_3,w_1,w_2)$ with the relations
		\begin{eqnarray*}
			\lb ZX&=&w_1 W,\\
			\lb ZY&=&w_2 W,\\
			\lb WX&=&z_3 Z,\\
			\lb WY&=&\frac{z_3w_2}{w_1} Z.
		\end{eqnarray*}
Here the corresponding simply connected Lie group is a semidirect product $\rn^2\ltimes\rn^2$.

\smallskip		
\noindent
$(\mathcal{I}):$ 
For $\g_{12}$ we have
$$z_1=z_2=0, \ \ z_4=\frac{z_3w_2}{w_1} \ \ \text{and} \ \ w_1\neq0.$$ We see that the structure $J$ is {\it integrable} if and only if $$z_3+w_1=\frac{z_3w_2}{w_1}+w_2=0,$$
or equivalently, $z_3=-w_1\neq 0$.  Here we yield the $4$-dimensional family $\g_{12}^{\mathcal{I}}(w_1,w_2,\theta_1,\theta_2)$
which has the Lie bracket relations
		\begin{eqnarray*}
			\lb ZX&=&w_1 W,\\
			\lb ZY&=&w_2 W,\\
			\lb WX&=&-w_1 Z,\\
			\lb WY&=&-w_2 Z,\\
			\lb YX&=&\theta_1 Z+\theta_2 W.
		\end{eqnarray*}

\noindent
$(\mathcal{K}):$
The family is {\it K\"{a}hler} if and only if $\theta_1=\theta_2=0$ and $z_3=-w_1\neq 0,$ which gives the  $2$-dimensional family
$\g_{12}^{\mathcal{K}}(w_1,w_2)$ with relations
		\begin{eqnarray*}
			\lb ZX&=&w_1 W,\\
			\lb ZY&=&w_2 W,\\
			\lb WX&=&-w_1 Z,\\
			\lb WY&=&-w_2 Z.
		\end{eqnarray*}
The corresponding simply connected Lie group is clearly a semidirect product $\rn^2\ltimes \rn^2$.
\end{example}


\begin{example}[$\g_{13}(z_3,z_4,\theta_1,\theta_2)$]
\label{exam-F3}
This is a $4$-dimensional family of {\it nilpotent} Lie algebras. Here $\Lambda=(0,0,0,0)$,  $z_1=w_1=0$ and $z_3\neq 0$, which gives $z_2=w_2=0$ and the solutions
	\begin{eqnarray*}
		\lb WX&=&z_3 Z,\\
		\lb WY&=&z_4 Z,\\
		\lb YX&=&\theta_1 Z+\theta_2 W.
	\end{eqnarray*}

\noindent	
$(\mathcal{AK}):$
Because $\alpha=a=0$, we see that $\g_{13}$ family is in the {\it almost K\"{a}hler} class if and only if $\theta_1=\theta_2=0$. We get the $2$-dimensional family $\g_{13}^{\mathcal{AK}}(z_3,z_4)$ with
		\begin{eqnarray*}
			\lb WX&=&z_3 Z,\\
			\lb WY&=&z_4 Z.
		\end{eqnarray*}
Here the corresponding simply connected Lie group is a semidirect product $\rn^2\ltimes\rn^2$.
	
\smallskip		
\noindent	
$(\mathcal{I}):$ 
The condition $z_1=z_2=w_1=w_2=0$ gives that $J$ is {\it integrable} if and only if $z_3=z_4=0$, contradicting $z_3\neq 0$.  Hence $\g_{13}^{\mathcal{I}}=\{0\}$
	
\smallskip		
\noindent	
$(\mathcal{K}):$ 
This family $\g_{13}$ is never in the {\it K\"{a}hler} class, so $\g_{13}^{\mathcal{K}}=\{0\}$.
\end{example}


\begin{example}[$\g_{14}(z_2,z_4,w_2,\theta_1,\theta_2)$]\label{exam-F4}
Here we have the $5$-dimensional family of {\it solvable} Lie algebras with $\Lambda=(0,0,0,0)$ and $z_1=z_3=w_1=0$ and the bracket relations
\begin{eqnarray*}
\lb ZY&=&z_2 Z+w_2 W,\\
\lb WY&=&z_4 Z-z_2 W,\\
\lb YX&=&\theta_1 Z+\theta_2 W.
\end{eqnarray*}

\noindent	
$(\mathcal{AK}):$
The condition $\alpha=a=0$ implies that this family is {\it almost K\"{a}hler} if and only if $\theta_1=\theta_2=0.$ We get the $3$-dimensional family $\g_{14}^{\mathcal{AK}}(z_2,z_4,w_2)$ given by
		\begin{eqnarray*}
			\lb ZY&=&z_2 Z+w_2 W,\\
			\lb WY&=&z_4 Z-z_2 W.
		\end{eqnarray*}
Here the corresponding simply connected Lie group is a semidirect product $\rn^2\ltimes\rn^2$.

\smallskip		
\noindent
$(\mathcal{I}):$
From $z_1=z_3=w_1=0$ we see that $J$ is {\it integrable} if and only if $z_2=0$ and $z_4+w_2=0.$ This gives a $3$-dimensional family
$\g_{14}^{\I}(w_2,\theta_1,\theta_2)$ with Lie bracket relations
		\begin{eqnarray*}
			\lb ZY&=&w_2 W,\\
			\lb WY&=&-w_2 Z,\\
			\lb YX&=&\theta_1 Z+\theta_2 W.
		\end{eqnarray*}

\noindent
$(\mathcal{K}):$
The family is {\it K\"{a}hler} if and only if $\theta_1=\theta_2=0,$ $z_2=0$ and $z_4+w_2=0.$ We get a $1$-dimensional family $\g_{14}^{\mathcal{K}}(w_2)$ with
		\begin{eqnarray*}
			\lb ZY&=&w_2 W,\\
			\lb WY&=&-w_2 Z.
		\end{eqnarray*}
The corresponding simply connected Lie group is clearly a semidirect product $\rn^2\ltimes \rn^2$.
\end{example}


\begin{example}[$\g_{15}(\alpha,w_1,w_2)$]
\label{exam-F5}
This is a $3$-dimensional family of {\it solvable} Lie algebras with $\Lambda=(\alpha,0,0,0)$, which gives
	\begin{equation*}
		z_1=z_2=z_3=z_4=\theta_1=\theta_2=0
	\end{equation*}
	and
	\begin{eqnarray*}
		\lb ZX&=&\alpha X+w_1 W,\\
		\lb ZY&=&\alpha Y+w_2 W.
	\end{eqnarray*}
		
\noindent
$(\mathcal{AK}):$
Here, $a=\theta_1=\theta_2=0$ and $\alpha\neq0,$ so we see that the {\it almost K\"{a}hler} is never fulfilled, hence $\g_{15}^{\mathcal{AK}}=\{0\}$.

\smallskip		
\noindent
$(\mathcal{I}):$ 
The conditions $z_1=z_2=z_3=z_4=0$ imply that $J$ is {\it integrable} if and only if $w_1=w_2=0.$ We yield a 1-dimensional family $\g_{15}^{\mathcal{I}}(\alpha)$ with
		\begin{eqnarray*}
			\lb ZX&=&\alpha X,\\
			\lb ZY&=&\alpha Y.
		\end{eqnarray*}
Here the corresponding simply connected Lie group is a semidirect product $\rn^2\ltimes\rn^2$.

\smallskip		
\noindent	
$(\mathcal{K}):$ 
Here the {\it K\"{a}hler} condition is never satisfied, so $\g_{15}^{\mathcal{K}}=\{0\}$.
\end{example}


\begin{example}[$\g_{16}(\beta,w_1,w_2,\theta_1,\theta_2)$]
\label{exam-F6}
Here we have a $5$-dimensional family of Lie algebras. They are {\it not solvable} in general.  Here $\Lambda=(0,0,\beta,0)$, which implies $z_1=z_2=z_3=z_4=0$. This gives
	\begin{eqnarray*}
		\lb ZX&=&\beta Y+w_1 W,\\
		\lb ZY&=&-\beta X+w_2 W,\\
		\lb YX&=&\theta_1 Z+\theta_2 W.
	\end{eqnarray*}

\noindent	
$(\mathcal{AK}):$
We have $\alpha=a=0,$ so this family is {\it almost K\"{a}hler} if and only if $\theta_1=\theta_2=0.$ We get the $3$-dimensional family
$\g_{16}^{\mathcal{AK}}(\beta,w_1,w_2)$ with
		\begin{eqnarray*}
			\lb ZX&=&\beta Y+w_1 W,\\
			\lb ZY&=&-\beta X+w_2 W.
		\end{eqnarray*}

\noindent
$(\mathcal{I}):$ 
We have $z_1=z_2=z_3=z_4=0,$ so $J$ is {\it integrable} if and only if $w_1=w_2=0,$ giving the $3$-dimensional family $\g_{16}^{\mathcal{I}}(\beta,\theta_1,\theta_2)$ with
		\begin{eqnarray*}
			\lb ZX&=&\beta Y,\\
			\lb ZY&=&-\beta X,\\
			\lb YX&=&\theta_1 Z+\theta_2 W.
		\end{eqnarray*}
			
\noindent
$(\mathcal{K}):$
The family $\g_{16}$ is {\it K\"{a}hler} if and only if $\theta_1=\theta_2=0$ and $w_1=w_2=0.$ We obtain the 1-dimensional family $\g_{16}^{\mathcal{K}}(\beta)$ with Lie bracket relations
		\begin{eqnarray*}
			\lb ZX&=&\beta Y,\\
			\lb ZY&=&-\beta X.
		\end{eqnarray*}
The corresponding simply connected Lie group is clearly a semidirect product $\rn^2\ltimes \rn^2$.
\end{example}


\begin{example}[$\g_{17}(\alpha,a,w_1,w_2)$]
\label{exam-F7}
This is a $4$-dimensional family of {\it solvable} Lie algebras  $\Lambda=(\alpha,a,0,0)$, which gives
$$
z_1=-\frac {aw_1}\alpha,\ \ 
z_2=-\frac {aw_2}\alpha,\ \ 
z_3=-\frac {a^2w_1}{\alpha^2},\ \ 
z_4=-\frac {a^2w_2}{\alpha^2},\ \
\theta_1=0,\ \ 
\theta_2=0.
$$
Thus the bracker relations are
	\begin{eqnarray*}
		\lb ZX&=&\alpha X-\frac {aw_1}\alpha Z+w_1 W,\\
		\lb ZY&=&\alpha Y-\frac {aw_2}\alpha Z+w_2 W,\\
		\lb WX&=&a X-\frac {a^2w_1}{\alpha^2} Z+\frac {aw_1}\alpha W,\\
		\lb WY&=&a Y-\frac {a^2w_2}{\alpha^2} Z+\frac {aw_2}\alpha W.
	\end{eqnarray*}

\noindent	
$(\mathcal{AK}):$
Because $\theta_1=\theta_2=0$, $\alpha\neq0$ and $a\neq0$, the {\it almost K\"{a}hler} condition is never satisfied, hence $\g_{17}^{\mathcal{AK}}=\{0\}$.

\smallskip		
\noindent
$(\mathcal{I}):$ 
From
$$z_1=-\frac {aw_1}\alpha, \ \ z_2=-\frac {aw_2}\alpha, \ \ z_3=-\frac {a^2w_1}{\alpha^2} \ \ \text{and} \ \ z_4=-\frac {a^2w_2}{\alpha^2}$$ 
we see that the structure $J$ is {\it integrable} if and only if
		\begin{equation}\label{eqg17}
			\left(\alpha^2- {a^2}\right)w_1=2a\alpha w_2 \ \ \text{and} \ \ \left(\alpha^2- {a^2}\right)w_2=-2a\alpha w_1.
		\end{equation}
Since $2a\alpha\neq0$, we can divide and  get
		\begin{equation*}
			w_2=\frac{\alpha^2-a^2}{2a\alpha}w_1
			=-\frac{(\alpha^2-a^2)^2}{4a^2\alpha^2}w_2.
		\end{equation*}
If $w_2\neq0$, then we yield the impossible $(\alpha^2+a^2)^2=0$.  Thus $J$ is integrable if and only if $w_1=w_2=0$. This gives the $2$-dimensional family $\g_{17}^{\mathcal{H}}(\alpha,a)$ with
		\begin{eqnarray*}
			\lb ZX&=&\alpha X,\\
			\lb ZY&=&\alpha Y,\\
			\lb WX&=&a X,\\
			\lb WY&=&a Y.
		\end{eqnarray*}
Here the corresponding simply connected Lie group is a semidirect product $\rn^2\ltimes\rn^2$.

\smallskip		
\noindent	
$(\mathcal{K}):$
The family $\g_{17}$ never satisfies the {\it K\"{a}hler} condition, so  $\g_{17}^{\mathcal{AK}}=\{0\}$.
\end{example}


\begin{example}[$\g_{18}(\beta,b,z_3,z_4,\theta_1,\theta_2)$]
\label{exam-F8}
Here we have a $6$-dimensional family of Lie algebras. They are {\it not solvable} in general. Here $\Lambda=(0,0,\beta,b)$, which gives
$$
z_1=\frac {\beta z_3}b,\ \ 
z_2=\frac {\beta z_4}b,\ \ 
w_1=-\frac {\beta^2z_3}{b^2}\ \ \text{and}\ \ 
w_2=-\frac {\beta^2z_4}{b^2}.
$$
The bracket relations satisfy
	\begin{eqnarray*}
		\lb ZX&=& \beta Y+\frac {\beta z_3}b Z-\frac {\beta^2z_3}{b^2} W,\\
		\lb ZY&=&-\beta X+\frac {\beta z_4}b Z-\frac {\beta^2z_4}{b^2} W,\\
		\lb WX&=& b Y+z_3 Z-\frac {\beta z_3}b W,\\
		\lb WY&=&-b X+z_4 Z-\frac {\beta z_4}b W,\\
		\lb YX&=&\theta_1 Z+\theta_2 W.
	\end{eqnarray*}

\noindent	
$(\mathcal{AK}):$
Since $a=\alpha=0$, this family is {\it almost K\"{a}hler} if and only if $\theta_1=\theta_2=0.$  We get the $4$-dimensional family $\g_{18}^\mathcal{AK}(\beta,b,z_3,z_4)$ with
		\begin{eqnarray*}
			\lb ZX&=& \beta Y+\frac {\beta z_3}b Z-\frac {\beta^2z_3}{b^2} W,\\
			\lb ZY&=&-\beta X+\frac {\beta z_4}b Z-\frac {\beta^2z_4}{b^2} W,\\
			\lb WX&=& b Y+z_3 Z-\frac {\beta z_3}b W,\\
			\lb WY&=&-b X+z_4 Z-\frac {\beta z_4}b W.
		\end{eqnarray*}
		
\noindent
$(\mathcal{I}):$ 
For the family $\g_{18}$ we have 
$$
z_1=\frac {\beta z_3}b, \ \ 
z_2=\frac {\beta z_4}b, \ \ 
w_1=-\frac {\beta^2z_3}{b^2} \ \ \text{and} \ \ 
w_2=-\frac {\beta^2z_4}{b^2}$$ 
and we can easily see that $J$ is {\it integrable} if and only if
$$
(\beta^2-b^2)z_3=2\beta bz_4 \ \ \text{and} \ \ (\beta^2-b^2)z_4=-2\beta bz_3.
$$
We notice that this system has the same form as that of  \eqref{eqg17}. Thus the structure $J$ is integrable if and only if $z_3=z_4=0.$  Here we yield the $4$-dimensional family
$\g_{18}^\mathcal{I}(\beta,b,\theta_1,\theta_2)$, where
		\begin{eqnarray*}
			\lb ZX&=& \beta Y,\\
			\lb ZY&=&-\beta X,\\
			\lb WX&=& b Y,\\
			\lb WY&=&-b X,\\
			\lb YX&=&\theta_1 Z+\theta_2 W.
		\end{eqnarray*}
	
\noindent
$(\mathcal{K}):$
The family is {\it K\"{a}hler} if and only if $\theta_1=\theta_2=0$ and $z_3=z_4=0.$  Hence we obtain the $2$-dimensional family $\g_{18}^\mathcal{K}(\beta,b)$ with
		\begin{eqnarray*}
			\lb ZX&=& \beta Y,\\
			\lb ZY&=&-\beta X,\\
			\lb WX&=& b Y,\\
			\lb WY&=&-b X.
		\end{eqnarray*}
The corresponding simply connected Lie group is clearly a semidirect product $\rn^2\ltimes \rn^2$.
\end{example}


\begin{example}[$\g_{19}(\alpha,\beta,w_1,w_2)$]
\label{exam-F9}
This is a $4$-dimensional family of {\it solvable} Lie algebras. Here $\Lambda=(\alpha,0,\beta,0)$, which implies that
$$z_1=z_2=z_3=z_4=\theta_1=\theta_2=0$$
and the bracket relations are
\begin{eqnarray*}
\lb ZX&=&\alpha X +\beta Y+w_1 W,\\
\lb ZY&=&-\beta X+\alpha Y+w_2 W.
\end{eqnarray*}
	
\noindent
$(\mathcal{AK}):$ 
Here the {\it almost K\"{a}hler} condition is never satisfied since $\alpha\neq 0$ and $\theta_2=0$, hence $\g_{19}^{\mathcal{AK}}=\{0\}$.

\smallskip		
\noindent
$(\mathcal{I}):$ 
Because $z_1=z_2=z_3=z_4=0$, the structure $J$ is {\it integrable} if and only if $w_1=w_2=0.$  We yield the $2$-dimensional family  $\g_{19}^\mathcal{H}(\alpha,\beta)$ given by
\begin{eqnarray*}
\lb ZX&=&\alpha X +\beta Y,\\
\lb ZY&=&-\beta X+\alpha Y.
\end{eqnarray*}
Here the corresponding simply connected Lie group is a semidirect product $\rn^2\ltimes\rn^2$.
	
\smallskip		
\noindent
$(\mathcal{K}):$ 
The {\it K\"{a}hler} condition is never satisfied, so $\g_{19}^{\mathcal{K}}=\{0\}$.
\end{example}


\begin{example}[$\g_{20}(\alpha,a,\beta,w_1,w_2)$]
\label{exam-F10}
Here we have a $5$-dimensional family of {\it solvable} Lie algebras. Now $\Lambda=(\alpha,a,\beta,b)$, which gives
$$
z_1=-\frac {aw_1}\alpha,\ \ 
z_2=-\frac {aw_2}\alpha,\ \ 
z_3=-\frac {a^2w_1}{\alpha^2},\ \ 
z_4=-\frac {a^2w_2}{\alpha^2},$$
$$b=\frac{\beta a}\alpha,\ \ \theta_1=0,\ \ \theta_2=0.$$
The bracket relations fulfill
	\begin{eqnarray*}
		\lb ZX&=&\alpha X+\beta Y-\frac {aw_1}\alpha Z+w_1 W,\\
		\lb ZY&=&-\beta X+\alpha Y-\frac {aw_2}\alpha Z+w_2 W,\\
		\lb WX&=&a X+\frac{\beta a}\alpha Y-\frac {a^2w_1}{\alpha^2} Z+\frac a\alpha w_1 W,\\
		\lb WY&=&-\frac{\beta a}\alpha X+a Y-\frac {a^2w_2}{\alpha^2} Z+\frac a\alpha w_2 W.
	\end{eqnarray*}

\noindent
$(\mathcal{AK}):$ 
Since $\theta_1=\theta_2=0$, $\alpha\neq0$ and $a\neq0$, the {\it almost Kähler} condition is never satisfied, hence $\g_{20}^{\mathcal{AK}}=\{0\}$.

\smallskip
\noindent
$(\mathcal{I}):$ 
Here the coefficients $z_1,\dots,z_4,$ $w_1$ and $w_2$ are the same as for the family $\g_{17}$. Thus the structure $J$ is {\it integrable} if and only if $w_1=w_2=0$. This gives the  $3$-dimensional family $\g_{20}^\mathcal{I}(\alpha,a,\beta)$ with 
		\begin{eqnarray*}
			\lb ZX&=&\alpha X+\beta Y,\\
			\lb ZY&=&-\beta X+\alpha Y,\\
			\lb WX&=&a X+\frac{\beta a}\alpha Y,\\
			\lb WY&=&-\frac{\beta a}\alpha X+a Y.
		\end{eqnarray*}	
Here the corresponding simply connected Lie group is a semidirect product $\rn^2\ltimes\rn^2$.

\smallskip		
\noindent			
$(\mathcal{K}):$ 
The {\it K\"{a}hler} condition is never satisfied, therefore $\g_{20}^{\mathcal{K}}=\{0\}$.
\end{example}


\appendix

\section{The Homogeneous Second Order System}\label{appendix}

In Section \ref{section-the-foliations} we presented the general bracket relations (\ref{system}). To ensure that these actually define a Lie algebra they must satisfy the Jacobi equation.  A standard computation shows that this is equivalent to the following system of 14 second order homogeneous polynomial equations in the 14 variables involved
	\begin{eqnarray*}
		0 &=& \lambda a,\\
		0 &=& \lambda b,\\
		0 &=& -w_2z_3 + w_1z_4 - 2\alpha\theta_1 + rz_1,\\
		0 &=& -2z_4z_1 + 2z_3z_2 - 2a\theta_1 + rz_3,\\
		0 &=& -\lambda z_3 - z_2b + z_4\beta - z_1a + z_3\alpha,\\
		0 &=& -\lambda z_4 - z_2a + z_4\alpha + z_1b - z_3\beta,\\
		0 &=& \lambda\theta_1 - w_1z_4 + w_2z_3 - 2a\theta_2 - rz_1,\\
		0 &=& -\lambda\theta_2 + 2z_1w_2 - 2z_2w_1 - 2\alpha\theta_2 + rw_1,\\
		0 &=& -w_2a - w_1b - z_2\alpha - z_1\beta - \alpha r,\\
		0 &=& -w_2b + w_1a - z_2\beta + z_1\alpha + r\beta,\\
		0 &=& \lambda z_1 - w_2b - z_2\beta - w_1a - z_1\alpha,\\
		0 &=& z_2a + z_1b - z_4\alpha - z_3\beta - ar,\\
		0 &=& z_2b - z_1a - z_4\beta + z_3\alpha + rb,\\
		0 &=& \lambda z_2 - w_2a - z_2\alpha + w_1b + z_1\beta.
	\end{eqnarray*}


\end{document}